\theoremstyle{plain}
\newtheorem{theoreme}{Theorem}[section]
\newtheorem{prop}[theoreme]{Proposition}
\newtheorem{cor}[theoreme]{Corollary}
\theoremstyle{remark}
\newtheorem{definition}[theoreme]{Definition}
\newtheorem{ex}[theoreme]{Example}
\newtheorem{remark}[theoreme]{Remark}
\newtheorem{c-ex}[theoreme]{Counter-example}
\newtheorem*{notation}{Notation}
\date{}
\DeclareMathOperator{\ty}{type}
\DeclareMathOperator{\mty}{marked-type}
\DeclareMathOperator{\Res}{Res}
\DeclareMathOperator{\diag}{diag}
\DeclareMathOperator{\character}{char}
\author[O. Tout]{Omar Tout}
\address{Department of Mathematics, Faculty of Sciences III, Lebanese University, Tripoli, Lebanon}
\email{omar-tout@outlook.fr}
\title[On the symmetric Gelfand pair $(\mathcal{H}_n\times \mathcal{H}_{n-1},\diag (\mathcal{H}_{n-1}))$]{On the symmetric Gelfand pair \\$(\mathcal{H}_n\times \mathcal{H}_{n-1},\diag (\mathcal{H}_{n-1}))$}
\keywords{}
\subjclass[2010]{ 05E10, 05E15, 20C30.}
\keywords{Symmetric Gelfand pairs, Hyperoctahedral group}
\begin{document}
\maketitle

\begin{abstract} We show that the $\mathcal{H}_{n-1}$-conjugacy classes of $\mathcal{H}_n,$ where $\mathcal{H}_n$ is the hyperoctahedral group on $2n$ elements, are indexed by marked bipartitions of $n.$ This will lead us to prove that $(\mathcal{H}_n\times \mathcal{H}_{n-1},\diag (\mathcal{H}_{n-1}))$ is a symmetric Gelfand pair and that the induced representation $1_{\diag (\mathcal{H}_{n-1})}^{\mathcal{H}_n\times \mathcal{H}_{n-1}}$ is multiplicity free. 
%The pair $(\mathcal{S}_k\wr \mathcal{S}_n\times \mathcal{S}_k\wr \mathcal{S}_{n-1},\diag \mathcal{S}_k\wr \mathcal{S}_{n-1})$ is a symmetric Gelfand pair for $k=1.$ In this paper we will show that it is also a symmetric Gelfand pair for $k=2$ but fails to be a symmetric Gelfand pair for $k\geq 3$ in general.
\end{abstract}

\section{Introduction}
\iffalse
If $n$ is a positive integer, we will denote by $\mathcal{S}_n$ the symmetric group on $n$ elements. Let $i$ and $k$ be two positive integers. The $k$-tuple $p_{k}(i)$ is the following set of size $k:$ 
$$p_k(i):=\lbrace (i-1)k+1, (i-1)k+2, \ldots , ik\rbrace.$$ 
The usual definition of the wreath product $\mathcal{S}_k \wr \mathcal{S}_n$ can be found in \cite{McDo}. As we showed in \cite{Tout2018}, $\mathcal{S}_k \wr \mathcal{S}_n$ is isomorphic to the subgroup of $\mathcal{S}_{kn}$ formed by permutations that send each set of the form $p_{k}(i),$ $1\leq i\leq n,$ to another set with the same form. Using this fact, we will use the following definition for $\mathcal{S}_k \wr \mathcal{S}_n$ in this paper
$$\mathcal{S}_k \wr \mathcal{S}_n:= \lbrace w \in \mathcal{S}_{kn}; \ \forall \ 1 \leq r \leq n, \ \exists \ 1 \leq r' \leq n \mid w(p_{k}(r))=p_{k}(r')\rbrace.$$
$\mathcal{S}_1 \wr \mathcal{S}_n$ is the symmetric group $\mathcal{S}_n$ and $\mathcal{S}_2 \wr \mathcal{S}_n$ is the hyperoctahedral group which will be denoted $\mathcal{H}_n.$
\fi

If $G$ is a finite group and $K$ is a subgroup of $G,$ we say that the pair $(G,K)$ is a \textit{Gelfand pair} if its associated double-class algebra $\mathbb{C}[K\setminus G/K]$ is commutative, see \cite[chapter VII.1]{McDo}. If $(G,K)$ is a Gelfand pair, the algebra of constant functions over the $K$-double-classes in $G$ has a particular basis, whose elements are called \textit{zonal spherical functions} of the pair $(G,K).$ 

The pair $(G,K)$ is said to be \textit{symmetric} if the action of $G$ on $G/K$ is symmetric, that is if $(x,y)\in \mathcal{O}$ then $(y,x)\in \mathcal{O}$ for all orbits $\mathcal{O}$ of $G$ on $G/K\times G/K.$ It the pair $(G,K)$ is symmetric and Gelfand then it is called a \textit{symmetric Gelfand pair}. It is known, see \cite{ceccherini2010representation}, that $(G,K)$ is a symmetric Gelfand pair if and only if $g^{-1}\in KgK$ for all $g\in G.$

The pair $(G\times G,\diag(G)),$ where $\diag(G)$ is the diagonal subgroup of $G\times G,$ is a symmetric Gelfand pair for any finite group $G,$ see \cite[Section VII.1]{McDo}. Its zonal sphercial functions are the normalised irreducible characters of $G.$ But, if $K$ is a subgroup of $G,$ the pair $(G\times K,\diag(K))$ is a symmetric Gelfand pair if and only if $g^{-1}$ and $g$ are $K$-conjugate for any $g\in G.$\\

If $n$ is a positive integer, we will denote by $\mathcal{S}_n$ the symmetric group on the set $[n]:=\lbrace 1,2,\ldots,n\rbrace.$ The hyperoctahedral group $\mathcal{H}_n$ on the set $[2n]$ is the following subgroup of $\mathcal{S}_{2n}$  
\begin{equation*}
\mathcal{H}_n:=\lbrace \omega\in \mathcal{S}_{2n} \text{ such that for any $i\in [n]$ there exists $j\in [n]$ with } \omega(p_2(i))=p_2(j)\rbrace,
\end{equation*}
where $p_2(i):=\lbrace 2i-1,2i\rbrace$ for any $1\leq i\leq n.$ In addition to $(\mathcal{S}_n\times \mathcal{S}_n,\diag(\mathcal{S}_n)),$ many symmetric Gelfand pairs involving symmetric groups where studied in the litterature. The pair $(\mathcal{S}_{2n}, \mathcal{H}_n)$ is a symmetric Gelfand pair studied in \cite[Section VII.2]{McDo}, \cite{Aker20122465} and \cite{toutejc}. In \cite[Example 1.4.10]{ceccherini2010representation},  the authors show that $(\mathcal{S}_{n},\mathcal{S}_{n-k}\times \mathcal{S}_{k})$ is a symmetric Gelfand pair. 

The symmetric Gelfand pair $(\mathcal{S}_n\times \mathcal{S}_{n-1},\diag(\mathcal{S}_{n-1}))$ was considered by Brender in \cite{brender1976spherical}. An explicit formula for its zonal spherical functions was used by Strahov in \cite{strahov2007generalized} to define the generalized characters of the symmetric group. Combinatorial properties and objects arise from these latters such as a Murnaghan–Nakayama type rule and the definition of generalized Schur functions. The goal of this paper is to prove that $(\mathcal{H}_n\times \mathcal{H}_{n-1},\diag (\mathcal{H}_{n-1}))$ is a symmetric Gelfand pair. This will open the question of finding combinatorial properties of the generalized characters of the hyperoctahedral group. We will not answer this problem here.\\

The paper is organised as follows. In Section \ref{sec:symm_gel_pairs}, the necessary definitions and properties of symmetric Gelfand pairs are given. Especially, we show that if $G$ is a finite group and $K$ is a subgroup of $G$ then $(G\times K,\diag K)$ is a symmetric Gelfand pair if and only if $g^{-1}$ and $g$ are $K$-conjugate for any $g\in G.$ In Section \ref{sec:sphe_func}, we revisit the theory of zonal spherical functions of Gelfand pairs. In Section \ref{sec:The_sym_Gel_pair}, we show that the $\mathcal{H}_{n-1}$-conjugacy classes of $\mathcal{H}_n$ are indexed by marked bipartitions of $n$ which will lead us to a simple proof that $(\mathcal{H}_n\times \mathcal{H}_{n-1},\diag (\mathcal{H}_{n-1}))$ is a symmetric Gelfand pair. In addition, we show that the induced representation $1_{\diag (\mathcal{H}_{n-1})}^{\mathcal{H}_n\times \mathcal{H}_{n-1}}$ is multiplicity free then we give a formula that expresses its character as sum of irreducible characters.

\iffalse
The goal of this paper is to study the pair $(\mathcal{S}_k\wr \mathcal{S}_n\times \mathcal{S}_k\wr \mathcal{S}_{n-1},\diag (\mathcal{S}_k\wr \mathcal{S}_{n-1})).$ When $k=1$ it coincides with the pair $(\mathcal{S}_n\times \mathcal{S}_{n-1},\diag(\mathcal{S}_{n-1}))$ already cited. We will prove that in general if $k\geq 3,$ $(\mathcal{S}_k\wr \mathcal{S}_n\times \mathcal{S}_k\wr \mathcal{S}_{n-1},\diag (\mathcal{S}_k\wr \mathcal{S}_{n-1}))$ is not a symmetric Gelfand pair but  when $k= 2,$ it is. This will open the question of finding the generalized characters along with their combinatorial properties of the hyperoctahedral group. We will not answer this problem in this paper.
\fi
\section{Symmetric Gelfand pairs}\label{sec:symm_gel_pairs}

In this section we suppose that $G$ is a finite group and $K$ is a subgroup of $G.$ We denote by $L(G)$ the algebra of complex-valued functions on $G$ and by $L(K\backslash G/K)$ the subalgebra of $L(G)$ of all the functions $f$ that are constant on the double-classes $KxK$ of $G,$ that is:
$$f\in L(K\backslash G/K) \text{ if and only if } f(kxk^{\prime})=f(x) \text{ for any $x\in G$ and $k,k^{\prime}\in K$}.$$
The product of two functions $f_1$ and $f_2$ in $L(G)$ is the convolution product:
$$(f_1\ast f_2)(g):=\sum_{h\in G}f_1(gh^{-1})f_2(h) \text{ for any $g\in G.$}$$ 

\begin{definition} The pair $(G,K)$ is called a Gelfand pair if the algebra $L(K\backslash G/K)$ is commutative.
\end{definition}

Suppose now that the group $G$ acts on a finite set $X.$ We say that an orbit $\mathcal{O}$ of $G$ on $X\times X$ is symmetric if $(x,y)\in \mathcal{O}$ implies that $(y,x)\in \mathcal{O}.$ In addition, if all orbits of $G$ on $X\times X$ are symmetric we say that the action of $G$ on $X$ is symmetric.
\begin{definition} The pair $(G,K)$ is called symmetric if the action of $G$ on $G/K$ is symmetric. If in addition $(G,K)$ is a Gelfand pair then we say that $(G,K)$ is a symmetric Gelfand pair. 
\end{definition}

\begin{prop}\label{prop_symmetri_pair} 
$(G,K)$ is symmetric if and only if $g^{-1}\in KgK$ for all $g\in G.$
\end{prop}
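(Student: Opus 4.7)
The plan is to translate the symmetry condition on the $G$-action on $G/K$ into an explicit condition on double cosets, by first parametrising the $G$-orbits on $G/K \times G/K$ (under the diagonal action) in terms of $K \backslash G/K$.

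The first step is to exhibit the standard bijection between these orbits and the double cosets. Given any pair $(xK,yK)$, the element $x^{-1}\in G$ sends it to $(K,x^{-1}yK)$, so every $G$-orbit contains a pair of the form $(K,hK)$. Two normalised pairs $(K,hK)$ and $(K,h'K)$ lie in the same orbit iff there exists $g\in G$ with $gK=K$ and $ghK=h'K$; the first equality forces $g\in K$, and the second then gives $h'\in KhK$. Hence the $G$-orbits on $G/K\times G/K$ are indexed by $K\backslash G/K$, with the double coset $KhK$ corresponding to the orbit containing $(K,hK)$.

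The second step is to see what coordinate swap does to such a representative. Swapping sends $(K,hK)$ to $(hK,K)$, and applying $h^{-1}\in G$ to this pair gives $(K,h^{-1}K)$; all three pairs lie in the same orbit. Therefore the orbit indexed by $KhK$ is symmetric if and only if $(K,h^{-1}K)$ belongs to the orbit of $(K,hK)$, which by the parametrisation of the previous step is equivalent to $h^{-1}\in KhK$. Quantifying over all $h\in G$ yields the proposition.

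I do not anticipate a genuine obstacle here: the proof is essentially a double unpacking of the definitions of the diagonal action and of the double cosets. The only point where one must be careful is to distinguish the action of the full group $G$ (used to reduce to representatives of the form $(K,hK)$) from the action of the subgroup $K$ (which governs when two such representatives are equivalent); mixing these up would muddle the inverse that appears in the final condition $h^{-1}\in KhK$.
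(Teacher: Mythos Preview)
Your argument is correct and follows essentially the same route as the paper: reduce every orbit to a representative of the form $(K,hK)$, observe that swapping and then acting by $h^{-1}$ lands on $(K,h^{-1}K)$, and conclude that symmetry amounts to $h^{-1}\in KhK$. The only cosmetic difference is that you first isolate the bijection between $G$-orbits on $G/K\times G/K$ and $K\backslash G/K$ as a separate lemma, whereas the paper carries out the two directions directly without naming this correspondence; the underlying manipulations are the same.
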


\begin{proof}
Suppose that $(G,K)$ is symmetric and let $g\in G.$ The couple $(K,g^{-1}K)$ belongs to the orbit of $(gK,K)$ since $(K,g^{-1}K)=g^{-1}(gK,K).$ Thus by symmetrisation $(g^{-1}K,K)$ belongs to the orbit of $(gK,K)$ which means that there exists $x\in G$ such that $g^{-1}K=xgK$ and $K=xK.$ Thus $x\in K$ and $g^{-1}\in KgK.$

Suppose now that $g^{-1}\in KgK$ for all $g\in G.$ Any orbit of $G$ on $G/K\times G/K$ has a representative of the form $(K,xK)$ with $x\in G.$ Take an element $(gK,gxK)$ in the orbit of $(K,xK)=x(x^{-1}K,K).$ Then $(gK,gxK)$ belongs to the orbit of $(x^{-1}K,K).$ But by hypothesis there exists $k_1,k_2\in K$ with $x^{-1}=k_1xk_2$ which means that $(x^{-1}K,K)=(k_1xK,K)=k_1(xK,k_1^{-1}K)=k_1(xK,K)=k_1g^{-1}(gxK,gK).$ Thus $(gxK,gK)$ belongs to the orbit of $(x^{-1}K,K)$ and $(G,K)$ is symmetric.
\end{proof}

\begin{prop}\label{prop_symmetric_Gelfand_pair}
$(G,K)$ is a symmetric Gelfand pair if and only if $g^{-1}\in KgK$ for all $g\in G.$
\end{prop}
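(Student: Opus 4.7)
The plan is to observe that Proposition \ref{prop_symmetri_pair} already takes care of the equivalence between the condition $g^{-1}\in KgK$ and the symmetry of the pair, so the only content of Proposition \ref{prop_symmetric_Gelfand_pair} is to show that symmetry alone is enough to force the double-class algebra $L(K\backslash G/K)$ to be commutative. The forward direction is immediate: if $(G,K)$ is a symmetric Gelfand pair, it is in particular symmetric, hence Proposition \ref{prop_symmetri_pair} gives $g^{-1}\in KgK$ for all $g\in G$.

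For the converse, I would use the standard involution argument on $L(G)$. Define $f^{\ast}(g):=f(g^{-1})$ for $f\in L(G)$ and $g\in G$. A short computation with a change of variable in the convolution sum shows that this map is an anti-involution of the convolution algebra, namely
\begin{equation*}
(f_1\ast f_2)^{\ast}=f_2^{\ast}\ast f_1^{\ast} \quad \text{for any } f_1,f_2\in L(G).
\end{equation*}
The key point is now that, under the hypothesis $g^{-1}\in KgK$ for all $g\in G$, any function $f\in L(K\backslash G/K)$ satisfies $f^{\ast}=f$: indeed $f$ is constant on the double-class $KgK$, and $g^{-1}$ lies in this very double-class, so $f^{\ast}(g)=f(g^{-1})=f(g)$.

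Combining these two facts, for $f_1,f_2\in L(K\backslash G/K)$ I get
\begin{equation*}
f_1\ast f_2=(f_1\ast f_2)^{\ast}=f_2^{\ast}\ast f_1^{\ast}=f_2\ast f_1,
\end{equation*}
which proves that $L(K\backslash G/K)$ is commutative, hence $(G,K)$ is a Gelfand pair. Together with Proposition \ref{prop_symmetri_pair}, it is in fact a symmetric Gelfand pair. I do not expect any real obstacle here: the only thing to be a bit careful with is the verification that $(f_1\ast f_2)^{\ast}=f_2^{\ast}\ast f_1^{\ast}$, which is a one-line reindexing $h\mapsto h^{-1}$ in the defining sum of the convolution product.
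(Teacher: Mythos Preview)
Your proof is correct and is essentially the same argument as the paper's: both reduce via Proposition~\ref{prop_symmetri_pair} to showing that $g^{-1}\in KgK$ forces $L(K\backslash G/K)$ to be commutative, and both do so by the involution $g\mapsto g^{-1}$ on the convolution. The only cosmetic difference is that the paper carries out the reindexing directly on $(f_1\ast f_2)(x)$ rather than first packaging it as the anti-involution $f\mapsto f^{\ast}$, but the underlying computation is identical.
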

\begin{proof} By Proposition \ref{prop_symmetri_pair}, we need only to show that if $g^{-1}\in KgK$ for all $g\in G$ then the algebra $L(K\backslash G/K)$ is commutative. For any two functions $f_1,f_2\in L(K\backslash G/K)$ and for any $x\in G$ we have:
\begin{eqnarray*}
(f_1\ast f_2)(x)=\sum_{h\in G}f_1(xh^{-1})f_2(h)&=&\sum_{k\in G,\atop{ k=xh^{-1}}}f_2(k^{-1}x)f_1(k)\\
&=&\sum_{k\in G}f_2(x^{-1}k)f_1(k^{-1}) \,\,\,\,\,\,\,\,\,\,\,\,\,\,\,\,\, (f_1,f_2\in L(K\backslash G/K))\\
&=&(f_2\ast f_1)(x^{-1})\,\,\,\,\,\,\,\,\,\,\,\,\,\,\,\,\,\,\,\,\,\,\, (\text{ definition of convolution})\\
&=&(f_2\ast f_1)(x) \,\,\,\,\,\,\,\,\,\,\,\,\,\,\,\,\,\,\,\,\,\,\,\,\,\,\,\,\,\,\,\,\,\,\,\, (f_2\ast f_1\in L(K\backslash G/K)).
\end{eqnarray*}
\end{proof}

If $K$ is a subgroup of $G$ we will denote by $\diag (K)$ the diagonal subgroup of $G\times K,$ that is the subgroup of $G\times K$ formed of all the elements $(k,k)$ with $k\in K:$
$$\diag (K):=\lbrace (k,k)\in G\times K \rbrace.$$

\begin{definition} Let $K$ be a subgroup of $G.$ Two elements $x,y\in G$ are said to be $K$-conjugate if there exists $k\in K$ such that $x=kyk^{-1}.$
\end{definition}

\begin{prop} \label{prop_couple_sym_Gel}
$(G\times K,\diag (K))$ is a symmetric Gelfand pair if and only if $g^{-1}$ and $g$ are $K$-conjugate for any $g\in G.$
\end{prop}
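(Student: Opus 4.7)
The plan is to apply Proposition \ref{prop_symmetric_Gelfand_pair} to the pair $(G\times K,\diag(K))$: this pair is a symmetric Gelfand pair if and only if for every $(g,k)\in G\times K$ the inverse $(g^{-1},k^{-1})$ lies in $\diag(K)(g,k)\diag(K)$. A direct computation shows that
$$\diag(K)(g,k)\diag(K)=\{(k_1 g k_2,\,k_1 k k_2)\mid k_1,k_2\in K\},$$
so the task reduces to finding, for each $(g,k)$, elements $k_1,k_2\in K$ satisfying simultaneously $g^{-1}=k_1 g k_2$ and $k^{-1}=k_1 k k_2$.

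The key step is to eliminate $k_2$ using the second equation. From $k^{-1}=k_1 k k_2$ one gets $k_2=k^{-1}k_1^{-1}k^{-1}$, and substituting into the first equation and rearranging transforms it into the single condition
$$g^{-1}k=k_1(gk^{-1})k_1^{-1}.$$
In other words, the symmetric Gelfand pair condition is equivalent to: for every $(g,k)\in G\times K$, the elements $g^{-1}k$ and $gk^{-1}$ are $K$-conjugate.

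It remains to argue that this last condition is equivalent to the simpler one in the statement. Making the change of variable $h:=gk^{-1}$ (so $g=hk$ and, as $(g,k)$ ranges over $G\times K$, $(h,k)$ does too), one has $g^{-1}k=k^{-1}h^{-1}k$, so the condition becomes: $k^{-1}h^{-1}k$ and $h$ are $K$-conjugate for every $(h,k)\in G\times K$. Since $k\in K$, the element $k^{-1}h^{-1}k$ is automatically $K$-conjugate to $h^{-1}$, so the condition reduces to $h^{-1}$ and $h$ being $K$-conjugate for every $h\in G$. Conversely, if $h$ and $h^{-1}$ are $K$-conjugate for every $h\in G$, then chaining $K$-conjugacies gives $k^{-1}h^{-1}k\sim_K h^{-1}\sim_K h$ for every $h\in G$ and $k\in K$, establishing the other direction.

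The only real obstacle is the algebraic bookkeeping of the two coupled equations for $k_1,k_2$; once $k_2$ is eliminated, the rest is just the observation that the $k$-dependence is absorbed into $K$-conjugation, and the taking of $k=1$ in the "only if" direction is immediate.
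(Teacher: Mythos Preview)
Your proof is correct and follows essentially the same approach as the paper: both reduce the double-coset condition for $(g,k)$ via Proposition~\ref{prop_symmetric_Gelfand_pair} to a $K$-conjugacy statement about $gk^{-1}$, and both obtain the ``only if'' direction by specializing to $k=1$. The only cosmetic difference is that the paper, for the converse, directly applies the hypothesis to $gk^{-1}$ and exhibits the explicit identity $(g,k)^{-1}=(k^{-1}x,k^{-1}x)(g,k)(k^{-1}x^{-1},k^{-1}x^{-1})$, whereas you arrive at the same reduction by first eliminating $k_2$ and then performing the change of variable $h=gk^{-1}$.
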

\begin{proof}
By Proposition \ref{prop_symmetric_Gelfand_pair}, if $(G\times K,\diag (K))$ is a symmetric Gelfand pair then for any $(g,k)\in G\times K,$ $(g,k)^{-1}$ belongs to the $\diag (K)$-double-coset of $(g,k).$ In particular, if $g\in G$ then $(g^{-1},1)$ belongs to the $\diag (K)$-double-coset of $(g,1),$ that is there exist $k_1,k_2\in K$ such that $g^{-1}=k_1gk_2$ and $1=k_1k_2.$ Conversely, if $(g,k)\in G\times K$ then by hypothesis there exists $x\in K$ such that $(gk^{-1})^{-1}=xgk^{-1}x^{-1},$ which implies that $(g,k)^{-1}=(k^{-1}x,k^{-1}x)(g,k)(k^{-1}x^{-1},k^{-1}x^{-1})$ is in the $\diag (K)$-double-coset of $(g,k).$ Thus $(G\times K,\diag (K))$ is a symmetric Gelfand pair again by Proposition \ref{prop_symmetric_Gelfand_pair}.
\end{proof}

\begin{cor}
$(G\times G,\diag (G))$ is a symmetric Gelfand pair for any finite group $G.$
\end{cor}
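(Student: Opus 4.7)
The strategy is to obtain this as the special case $K = G$ of Proposition~\ref{prop_couple_sym_Gel}. Under that specialization, the biconditional supplied by the proposition reduces to the following: $(G \times G, \diag(G))$ is a symmetric Gelfand pair if and only if for every $g \in G$, the elements $g$ and $g^{-1}$ are $G$-conjugate in the usual sense. The entire content of the proof therefore collapses to checking this single conjugacy condition.

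To carry out that check, my plan is, for each $g \in G$, to produce an explicit element $k \in G$ such that $k g k^{-1} = g^{-1}$. Once such a conjugator is in hand (uniformly in $g$), Proposition~\ref{prop_couple_sym_Gel} delivers the symmetric Gelfand property at once, with no further calculation required. The corollary itself contributes nothing beyond the specialization $K=G$; everything interesting has already been done in Proposition~\ref{prop_couple_sym_Gel}.

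If supplying the conjugator $k$ directly proves awkward, a second route is to verify the condition of Proposition~\ref{prop_symmetric_Gelfand_pair} head-on, namely to show $(a,b)^{-1}\in \diag(G)\cdot(a,b)\cdot\diag(G)$ for an arbitrary $(a,b)\in G\times G$. Unwinding this double-coset membership, setting $k_1 = a^{-1} h^{-1} a^{-1}$ from the first coordinate and substituting into the second, reduces to showing that $ab^{-1}$ is conjugate in $G$ to $a^{-1}b = (ab^{-1})^{\pm 1 \cdot \mathrm{conj}}$, which once again boils down to each element of $G$ being conjugate to its inverse. The main (and really only) obstacle is therefore this ambivalence-type conjugacy statement; once it is secured, the corollary is a one-line consequence.
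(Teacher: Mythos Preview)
Your reduction via Proposition~\ref{prop_couple_sym_Gel} with $K=G$ is exactly the route the paper implicitly takes (the corollary is stated without proof, immediately after that proposition). So the strategy matches.

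However, there is a genuine gap: the conjugacy condition you reduce to is \emph{false} for general finite groups. You write that you will ``produce an explicit element $k\in G$ such that $kgk^{-1}=g^{-1}$,'' but no such $k$ need exist. A group in which every element is conjugate to its inverse is called \emph{ambivalent}, and most finite groups are not ambivalent. The smallest counterexample is $G=\mathbb{Z}/3\mathbb{Z}$: the element $1$ has inverse $2$, and in an abelian group conjugacy classes are singletons, so $1$ and $2$ are not conjugate. One can check directly that $(2,0)\notin \diag(\mathbb{Z}/3)\,(1,0)\,\diag(\mathbb{Z}/3)$, so $(\mathbb{Z}/3\times\mathbb{Z}/3,\diag(\mathbb{Z}/3))$ is a Gelfand pair but \emph{not} a symmetric one.

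In other words, the corollary as stated appears to be in error (the pair $(G\times G,\diag(G))$ is always a Gelfand pair, but is a \emph{symmetric} Gelfand pair only when $G$ is ambivalent), and your proposal inherits that error. Your second route via Proposition~\ref{prop_symmetric_Gelfand_pair} runs into the same obstruction, as you yourself note: it ``once again boils down to each element of $G$ being conjugate to its inverse.'' That obstruction is not a technicality to be worked around; it is a genuine counterexample to the statement.
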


\begin{cor}\label{cor_symm}
If $(G\times K,\diag (K))$ is a symmetric Gelfand pair then $(G,K)$ is a symmetric Gelfand pair.
\end{cor}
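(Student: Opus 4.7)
The plan is to chain together Propositions \ref{prop_couple_sym_Gel} and \ref{prop_symmetric_Gelfand_pair}, which together reduce the corollary to an essentially immediate set-theoretic observation.

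First I would apply Proposition \ref{prop_couple_sym_Gel} to the hypothesis: since $(G\times K,\diag(K))$ is a symmetric Gelfand pair, for every $g\in G$ there exists $k\in K$ such that $g^{-1}=kgk^{-1}$.

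Next I would observe that such an equality places $g^{-1}$ inside the double-coset $KgK$: taking $k_1=k\in K$ and $k_2=k^{-1}\in K$ one has $g^{-1}=k_1 g k_2\in KgK$. Since $g$ was arbitrary, the condition of Proposition \ref{prop_symmetric_Gelfand_pair} holds for the pair $(G,K)$, and hence $(G,K)$ is itself a symmetric Gelfand pair.

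There is no real obstacle here; the corollary is really a compatibility check between the two characterizations already established, namely that $K$-conjugacy is strictly stronger than belonging to the same $K$-double-coset. The only thing worth being careful about is citing Proposition \ref{prop_couple_sym_Gel} in the direction ``symmetric Gelfand pair $\Rightarrow$ $K$-conjugacy of $g$ and $g^{-1}$'' and then Proposition \ref{prop_symmetric_Gelfand_pair} in the direction ``$g^{-1}\in KgK$ for all $g$ $\Rightarrow$ symmetric Gelfand pair'', so that the logical flow of implications actually closes.
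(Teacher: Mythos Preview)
Your proof is correct and matches the paper's own argument essentially word for word: apply Proposition~\ref{prop_couple_sym_Gel} to get $K$-conjugacy of $g$ and $g^{-1}$, note that this forces $g^{-1}\in KgK$, and conclude via Proposition~\ref{prop_symmetric_Gelfand_pair}.
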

\begin{proof}
By Proposition \ref{prop_couple_sym_Gel}, if $(G\times K,\diag (K))$ is a symmetric Gelfand pair then $g^{-1}$ and $g$ are $K$-conjugate for any $g\in G$ which implies that $g^{-1}\in KgK$ and $(G,K)$ is a symmetric Gelfand pair by Proposition \ref{prop_symmetric_Gelfand_pair}.
\end{proof}
\begin{ex} For any positive integer $n,$ the pair $(\mathcal{S}_n\times \mathcal{S}_{n-1},\diag (\mathcal{S}_{n-1}))$ is a symmetric Gelfand pair as it is shown in \cite[Theorem 3.2.1]{ceccherini2010representation} and in the paper \cite{strahov2007generalized} of Strahov. This implies, using Corollary \ref{cor_symm}, that in particular the pair $(\mathcal{S}_n,\mathcal{S}_{n-1})$ is a symmetric Gelfand pair. This latter fact can be proven easily using Proposition \ref{prop_symmetri_pair} and the fact that $g^{-1}=g^{-1}gg^{-1}$ for any $g\in \mathcal{S}_{n-1}$ and 
\begin{equation*}
g^{-1}=g^{-1}(n\, g(n))gg^{-1}(n\, g(n)) \text{ for any $g\in \mathcal{S}_n\setminus \mathcal{S}_{n-1}$ with $g^{-1}(n\, g(n))$ being in $\mathcal{S}_{n-1}$} . 
\end{equation*}
\end{ex}
The inverse of the above Corollary \ref{cor_symm} is not always true. This means that if $(G,K)$ is a symmetric Gelfand pair then $(G\times K,\diag (K))$ is not necessarily a symmetric Gelfand pair. To prove this we consider the following example involving the hyperocahedral group $\mathcal{H}_n.$ \iffalse If $i$ is a positive integer we will denote by $p_2(i)$ the set containing the elements $2i-1$ and $2i.$ The hyperoctahedral group $\mathcal{H}_n$ on the set $[2n]$ is the following subgroup of $\mathcal{S}_{2n}$  
\begin{equation*}
\mathcal{H}_n:=\lbrace \omega\in \mathcal{S}_{2n} \text{ such that for any $i\in [n]$ there exists $j\in [n]$ with } \omega(p_2(i))=p_2(j)\rbrace.
\end{equation*}
\fi
\begin{ex}
It is well known that the pair $(\mathcal{S}_{2n},\mathcal{H}_n)$ is a symmetric Gelfand pair, see \cite{McDo}. We used the mathematical software Sagemath \cite{SAGE} to check that for $\omega=(1,6,4,5,2,3,8,7)\in \mathcal{S}_8,$ $\omega^{-1}\neq \sigma \omega \sigma^{-1}$ for any $\sigma\in \mathcal{H}_4.$ This implies that the pair $(\mathcal{S}_8\times \mathcal{H}_4,\diag (\mathcal{H}_4))$ is not a symmetric Gelfand pair by Proposition \ref{prop_couple_sym_Gel}. In fact $n=4$ is the least integer for which the pair $(\mathcal{S}_{2n}\times \mathcal{H}_n,\diag (\mathcal{H}_n))$ is not a symmetric Gelfand pair.
\end{ex}

\section{Zonal spherical functions and generalised characters}\label{sec:sphe_func}

Let $\hat{G}$ be a fixed set of irreducible pairwise inequivalent representations of $G.$ A representation of $G$ is said to be multiplicity free if each irreducible representation of $\hat{G}$ appears at most once in its decomposition as sum of irreducible representations. Consider now the algebra $\mathbb{C}[G/K]$ spanned by the set of left cosets of $G$ and let $G$ acts on it as follows:
$$g(k_iK)=(gk_i)K \text{ for every $g\in G$ and every representative $k_i$ of $G/K$}.$$
This representation of $G$ is usually denoted $1_K^G$ and called the induced representation of $K$ on $G.$ For a proof of the following theorem, the reader is invited to check \cite[$(1.1)$ page $389$]{McDo}.

\begin{theoreme}\label{th} The following two conditions are equivalent:
\begin{enumerate}
\item[1-] $(G,K)$ is a Gelfand pair.
\item[2-] the induced representation $1_K^G$ is multiplicity free.
\end{enumerate}
\end{theoreme}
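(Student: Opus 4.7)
The plan is to establish the standard isomorphism between the commutant of the induced representation and the double-coset algebra, and then apply Schur's lemma. More precisely, I would first realise $1_K^G$ concretely on the space $\mathbb{C}[G/K]$ spanned by left cosets, as in the excerpt. By Schur's lemma, the commutant $\End_G(\mathbb{C}[G/K])$ decomposes as $\prod_{\rho\in \hat{G}} \Mat_{m_\rho}(\mathbb{C})$, where $m_\rho$ is the multiplicity of $\rho$ in $1_K^G$. Such a product of matrix algebras is commutative if and only if every $m_\rho\in\{0,1\}$, which is exactly the statement that $1_K^G$ is multiplicity free. So the theorem reduces to proving that $L(K\backslash G/K)$ is commutative if and only if $\End_G(\mathbb{C}[G/K])$ is commutative.

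Next, I would identify these two algebras. A $G$-equivariant endomorphism $T$ of $\mathbb{C}[G/K]$ is determined by its value $T(K)\in \mathbb{C}[G/K]$, and the equivariance forces $T(K)$ to be a $K$-fixed vector; such vectors are precisely the linear combinations of sums $\sum_{xK\subset KgK} xK$ indexed by double cosets $KgK$. Thus $\End_G(\mathbb{C}[G/K])$ has dimension $\lvert K\backslash G/K\rvert$, the same as $L(K\backslash G/K)$. I would then define the explicit map $\Phi: L(K\backslash G/K)\to \End_G(\mathbb{C}[G/K])$ sending a double-coset-constant function $f$ to the operator $T_f(gK)=\sum_{hK\in G/K} f(gh^{-1})\,hK$, after first lifting $f$ naturally to a function on $G/K$. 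A direct computation on the basis of double cosets shows that composition $T_{f_1}\circ T_{f_2}$ corresponds to the convolution $f_1\ast f_2$ (possibly up to passing to the opposite algebra, which does not affect commutativity). Hence $\Phi$ is an algebra (anti-)isomorphism.

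Combining the two steps: $(G,K)$ is a Gelfand pair, meaning $L(K\backslash G/K)$ is commutative, if and only if $\End_G(1_K^G)$ is commutative, if and only if $1_K^G$ is multiplicity free. This completes the equivalence.

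The main obstacle, in my view, is not conceptual but rather the careful bookkeeping in Step 2: correctly matching the convolution formula from the paper's definition with the composition of the endomorphisms $T_f$, and tracking whether one obtains an isomorphism or an anti-isomorphism. Once this is pinned down, commutativity transfers across the map in either case, and the proof goes through.
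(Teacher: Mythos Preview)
Your argument is the standard one and is correct in outline. Note, however, that the paper does not actually give its own proof of this theorem: immediately before the statement it simply refers the reader to \cite[(1.1), page 389]{McDo}, so there is nothing in the paper to compare against. What you have sketched is essentially the argument found there.

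One small technical wrinkle: the operator $T_f(gK)=\sum_{hK} f(gh^{-1})\,hK$ as written is neither well defined on cosets (the value $f(gh^{-1})$ depends on the chosen representatives $g,h$, not only on $gK,hK$) nor $G$-equivariant for the left regular action. The map that does the job is right convolution $R_f:\phi\mapsto \phi\ast f$ on the space of right-$K$-invariant functions on $G$; this is $G$-equivariant and gives an anti-isomorphism $L(K\backslash G/K)\cong \End_G(1_K^G)$. You already anticipated the ``opposite algebra'' phenomenon, and since only commutativity is at stake the conclusion is unaffected.
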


Suppose that $(G,K)$ is a Gelfand pair and that :
$$1_K^G=\bigoplus_{i=1}^{s}X_i,$$
where $X_i$ are irreducible representations of $G.$ We define the functions $\omega_i:G\rightarrow \mathbb{C},$ using the irreducible characters $\mathcal{X}_i,$ as follows :
\begin{equation}
\omega_i(x)=\frac{1}{|K|}\sum_{k\in K}\mathcal{X}_i(x^{-1}k) \text{ for every $x\in G.$} 
\end{equation} 
The functions $(\omega_i)_{1\leq i\leq s}$ are called the \textit{zonal spherical functions} of the pair $(G,K).$ They have a long list of important properties, see \cite[page 389]{McDo}. They form an orthogonal basis for $L(K\backslash G/K)$ and they satisfy the following equality :
\begin{equation}\label{prop_fon_zon}
\omega_i(x)\omega_i(y)=\frac{1}{|K|}\sum_{k\in K}\omega_i(xky),
\end{equation}
for every $1\leq i\leq s$ and for every $x,y\in G.$\\

Let us denote by $C(G,K)$ the subalgebra of $L(G)$ of functions which are constant on the $K$-conjugacy classes, that is:
\begin{equation*}
C(G,K):=\lbrace f\in L(G) \text{ such that $f(x)=f(y)$ for any two $K$-conjugate elements $x,y\in G$}\rbrace.
\end{equation*}
Recall that any irreducible representation of $G\times K$ is of the form $X\times Y$ where $X$ and $Y$ are irreducible representations of $G$ and $K$ respectively. Suppose that $(G\times K,\diag (K))$ is a symmetric Gelfand pair with 
$$1_{\diag (K)}^{G\times K}=\bigoplus_{1\leq i\leq r \atop{1\leq j\leq s}} X_i\times Y_j, $$
where $X_i$ and $Y_j$ are irreducible representations of $G$ and $K$ respectively for any $i$ and $j.$
The zonal spherical functions of the pair $(G\times K,\diag (K))$ are then given by:
\begin{eqnarray*}
\omega_{i,j}(x,y)&=&\frac{1}{|K|}\sum_{h\in K}\mathcal{X}_i(x^{-1}h)\mathcal{Y}_j(y^{-1}h)
\end{eqnarray*} 
for any $(x,y)\in G\times K.$ In \cite[Lemma 2.1.1]{ceccherini2010representation}, the authors showed that the map $$\Phi:L(\diag(K) \setminus G\times K / \diag(K))\rightarrow C(G,K)$$ defined by:
\begin{equation*}
[\Phi(F)](g)=|K|F(g,1_G)
\end{equation*}
is a linear isomorphism of algebras and that 
\begin{equation*}
||\Phi(F)||_{L(G)}^2=|K| ||F||_{L(G\times K)}^2.
\end{equation*}
The image of the functions $\frac{\mathcal{Y}_j(1)}{|K|}\omega_{i,j}$ by $\Phi,$ which shall be denoted $\varpi_{i,j},$ form an orthogonal basis for the algebra $C(G,K).$ They will be called the $K$-generalised characters of the group $G$ and they are explicitly defined by  
\begin{equation}\label{Def_Gen_char}
\varpi_{i,j}(g)=\mathcal{Y}_j(1)\omega_{i,j}(g,1) \text{ for any $g\in G.$}
\end{equation}

The $\mathcal{S}_{n-1}$-generalised characters of the symmetric group $\mathcal{S}_n$ appeared for the first time in the paper \cite{strahov2007generalized} of Strahov. They were introduced using an explicit formula for the zonal spherical functions of the symmetric Gelfand pair $(\mathcal{S}_n\times \mathcal{S}_{n-1},\diag(\mathcal{S}_{n-1})).$ Many interesting combinatorial properties and formulas can be extended from characters to generalised characters of the symmetric group. A Murnaghan–Nakayama type rule, a Frobenius type formula, an analogue of the Jacobi-Trudi formula and of the determinantal formula for the generalised characters of the symmetric group and the generalised Schur functions can be found in \cite{strahov2007generalized}.

\section{The symmetric Gelfand pair $(\mathcal{H}_n\times \mathcal{H}_{n-1},\diag (\mathcal{H}_{n-1}))$}\label{sec:The_sym_Gel_pair}

In this section we will show that the $\mathcal{H}_{n-1}$-conjugacy classes of $\mathcal{H}_n$ are indexed by marked bipartitions of $n.$ In particular, we will show that for any element $\omega\in \mathcal{H}_n,$ $\omega$ and its inverse $\omega^{-1}$ belong to the same $\mathcal{H}_{n-1}$-conjugacy class of $\mathcal{H}_n.$ This implies using Proposition \ref{prop_couple_sym_Gel} that $(\mathcal{H}_n\times \mathcal{H}_{n-1},\diag (\mathcal{H}_{n-1}))$ is a symmetric Gelfand pair.

\subsection{Marked partitions} A \textit{partition} $\lambda$ is a weakly decreasing list of positive integers $(\lambda_1,\ldots,\lambda_l)$ where $\lambda_1\geq \lambda_2\geq\ldots \geq\lambda_l\geq 1.$ The $\lambda_i$ are called the \textit{parts} of $\lambda$; the \textit{size} of $\lambda$, denoted by $|\lambda|$, is the sum of all of its parts. If $|\lambda|=n$, we say that $\lambda$ is a partition of $n.$ 
%and we write $\lambda\vdash n$. The number of parts of $\lambda$ is denoted by $l(\lambda)$. We will also use the exponential notation $\lambda=(1^{m_1(\lambda)},2^{m_2(\lambda)},3^{m_3(\lambda)},\ldots),$ where $m_i(\lambda)$ is the number of parts equal to $i$ in the partition $\lambda.$ In case there is no confusion, we will omit $\lambda$ from $m_i(\lambda)$ to simplify our notation. If $\lambda=(1^{m_1(\lambda)},2^{m_2(\lambda)},3^{m_3(\lambda)},\ldots,n^{m_n(\lambda)})$ is a partition of $n$ then $\sum_{i=1}^n im_i(\lambda)=n.$ We will dismiss $i^{m_i(\lambda)}$ from $\lambda$ when $m_i(\lambda)=0,$ for example, we will write $\lambda=(1^2,3,6^2)$ instead of $\lambda=(1^2,2^0,3,4^0,5^0,6^2,7^0).$ If $\lambda$ and $\delta$ are two partitions, we define the \textit{union} $\lambda \cup \delta$ and subtraction $\lambda \setminus \delta$ (if exists) as the following partitions:
%$$\lambda \cup \delta=(1^{m_1(\lambda)+m_1(\delta)},2^{m_2(\lambda)+m_2(\delta)},3^{m_3(\lambda)+m_3(\delta)},\ldots).$$
%$$\lambda \setminus \delta=(1^{m_1(\lambda)-m_1(\delta)},2^{m_2(\lambda)-m_2(\delta)},3^{m_3(\lambda)-m_3(\delta)},\ldots) \text{ if $m_i(\lambda)\geq m_i(\delta)$ for any $i.$ }$$

In this paper we will face problems that affect a particular part $\lambda_j$ of a partition $\lambda.$ In this case the part $\lambda_j$ will be called a \textit{marked part} of $\lambda.$ If $i$ is a marked part of a partition $\lambda$ then we will call $\lambda$ a marked partition at $i$ and we will write ${\hat{\lambda}}^i$ to designate it. The set of all partitions of $n$ will be denoted $\mathcal{P}_n$ while $\mathcal{MP}_n$ will denote the set of all marked partitions of $n.$ $\mathcal{P}_0$ will be considered to be the empty set $\emptyset.$

\iffalse
By a \emph{ family of partitions } we will always mean a family of partitions $\Lambda=(\Lambda(\lambda))_\lambda$ indexed by the partitions $\lambda$ of $k$ that satisfies:
$$|\Lambda|:=\sum_{\lambda\vdash k}|\Lambda(\lambda)|=n.$$

A family of partitions $\Lambda=(\Lambda(\lambda))_\lambda$ is marked if there exists $\rho\in \mathcal{P}_k$ and a positive integer $i$ such that the partition $\Lambda(\rho)$ is marked at $i.$ In such case we will write $\widehat{\Lambda}^{\rho,i}.$
\fi

A \textit{bipartition} of $n$ is a couple $(\lambda_1,\lambda_2)\in \mathcal{P}_k\times \mathcal{P}_{n-k}$ for some $k\in [n].$ A \textit{marked bipartition} of $n$ is a bipartition of $n$ with $\lambda_1$ or $\lambda_2$ marked. We will write $\widehat{(\lambda_1,\lambda_2)}^{i,r}$ with $i\in \lbrace 1,2\rbrace$ to say that $(\lambda_1,\lambda_2)$ is a marked bipartition where $r$ is the marked part in the partition $\lambda_i.$

\subsection{Conjugacy classes of the hyperoctahedral group}\label{sec_hyp}

We recall that if $i$ is a positive integer then $p_2(i)$ is the set containing the elements $2i-1$ and $2i.$ If $a\in p_2(i)$, we shall denote by $\overline{a}$ the element of the set $p_2(i)\setminus \lbrace a\rbrace$ and thus we have $\overline{\overline{a}}=a.$ %The hyperoctahedral group $\mathcal{H}_n$ on the set $[2n]$ is the following subgroup of $\mathcal{S}_{2n}$  
%\begin{equation*}
%\mathcal{H}_n:=\lbrace \omega\in \mathcal{S}_{2n} \text{ such that for any $i\in [n]$ there exists $j\in [n]$ with } \omega(p_2(i))=p_2(j)\rbrace.
%\end{equation*}

Let us study the cycle decomposition of a permutation $\omega\in \mathcal{H}_n.$ If $\mathcal{C}=(a_1,\cdots ,a_{l})$ is a cycle of $\omega,$ we distinguish the following two cases:
\begin{enumerate}
\item $\overline{a_1}$ appears in the cycle $\mathcal{C},$ for example $a_j=\overline{a_1}.$ Since $\omega\in\mathcal{H}_n$ and $\omega(a_1)=a_2,$ we have $\omega(\overline{a_1})=\overline{a_2}=\omega(a_j).$ Likewise, since $\omega(a_{j-1})=\overline{a_1},$ we have $\omega(\overline{a_{j-1}})=a_1$ which means that $a_{l}=\overline{a_{j-1}}.$ Therefore,
$$\mathcal{C}=(a_1,\cdots a_{j-1},\overline{a_1},\cdots,\overline{a_{j-1}})$$
and $l=2(j-1)$ is even. We will denote such a cycle by $(\mathcal{O},\overline{\mathcal{O}}).$
\item $\overline{a_1}$ does not appear in the cycle $\mathcal{C}.$ Take the cycle $\overline{\mathcal{C}}$ which contains $\overline{a_1}.$ Since $\omega(a_1)=a_2$ and $\omega\in \mathcal{H}_n$, we have $\omega(\overline{a_1})=\overline{a_2}$ and so on. That means that the cycle $\overline{\mathcal{C}}$ has the following form,
$$\overline{\mathcal{C}}=(\overline{a_1},\overline{a_2},\cdots ,\overline{a_{l}})$$
and that $\mathcal{C}$ and $\overline{\mathcal{C}}$ appear in the cycle decomposition of $\omega.$
\end{enumerate} 

Suppose now that the cycle decomposition of a permutation $\omega$ of $\mathcal{H}_n$ is as follows:
\begin{equation}\label{cyc_decom_H_n}
\omega=\mathcal{C}_1\overline{\mathcal{C}_1}\mathcal{C}_2\overline{\mathcal{C}_2}\cdots \mathcal{C}_k\overline{\mathcal{C}_k}(\mathcal{O}^1,\overline{\mathcal{O}^1})(\mathcal{O}^2,\overline{\mathcal{O}^2})\cdots (\mathcal{O}^l,\overline{\mathcal{O}^l}),
\end{equation}
where the cycles $\mathcal{C}_i$ (resp. $(\mathcal{O}^j,\overline{\mathcal{O}^j})$) are written decreasingly according to their sizes. Define $\ty(\omega)$ to be the bipartition $(\lambda_1,\lambda_2)$ of $n$ where $\lambda_1$ (resp. $\lambda_2$) is the partition formed by the lengths of $\mathcal{C}_i$ (resp. $\mathcal{O}^i$).

\begin{ex}\label{main_ex} Consider the following permutation 
$$\omega=\begin{pmatrix}
1&2&&3&4&&5&6&&7&8&&9&10&&11&12&&13&14&&15&16\\
14&13&&1&2&&16&15&&7&8&&12&11&&10&9&&4&3&&5&6
\end{pmatrix}\in \mathcal{H}_{8}.$$ Its decomposition into product of disjoint cycles is as follows:
$$\omega=(1,14,3)(2,13,4)(7)(8)(9,12)(10,11)(5,16,6,15).$$
Thus we have $\ty(\omega)=((3,2,1),(2)).$
\end{ex}

It is well known that the conjugacy classes of the hyperoctahedral group $\mathcal{H}_n$ are indexed by bipartitions of $n$ and that two permutations in $\mathcal{H}_n$ are conjugate if and only if they have the same type, see \cite[Section 2]{stembridge1992projective}. 

\subsection{$\mathcal{H}_{n-1}$-conjugacy classes}
Suppose $\omega\in \mathcal{H}_n$ is written as product of disjoint cycles as in (\ref{cyc_decom_H_n}). We define \textit{marked-type} of $\omega$ to be the marked bipartition
\begin{enumerate}
\item[$\bullet$] $(\widehat{\lambda_1}^i,\lambda_2)$ if one of the elements $2n-1$ and $2n$ belongs to a certain $\mathcal{C}_j$ of length $i$ and the other belongs to $\overline{\mathcal{C}_j}.$
\item[$\bullet$] $(\lambda_1,\widehat{\lambda_2}^i)$ if the elements $2n-1$ and $2n$ belongs to a certain $(\mathcal{O}^j,\overline{\mathcal{O}^j})$ and the number of elements of $\mathcal{O}^j$ is $i.$ 
\end{enumerate}

\begin{ex} The $\mty$ of the permutation $\omega\in \mathcal{H}_8$ in Example \ref{main_ex} is $((3,2,1),\widehat{(2)}^2).$
\end{ex}

\begin{notation} If $c=(a_1,a_2,\ldots, a_l)$ is a cycle and $\alpha$ is a permutation then $\alpha(c)$ will denote the cycle $(\alpha(a_1),\alpha(a_2),\ldots, \alpha(a_l)).$
\end{notation}

\begin{prop}\label{prop_conj_H_n-1}
Two permutations $\omega_1,\omega_2\in \mathcal{H}_n$ are in the same $\mathcal{H}_{n-1}$-conjugacy class if and only if $\mty(\omega_1)=\mty(\omega_2).$ 
\end{prop}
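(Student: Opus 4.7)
The plan is to prove both directions by explicit cycle-matching arguments, following the standard template for computing conjugacy classes in groups of type $\mathcal{H}_n$.

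For the forward direction, suppose $\omega_2 = \alpha \omega_1 \alpha^{-1}$ for some $\alpha \in \mathcal{H}_{n-1}$. Conjugation sends a cycle $c = (a_1,\ldots,a_l)$ of $\omega_1$ to $\alpha(c) = (\alpha(a_1),\ldots,\alpha(a_l))$, a cycle of $\omega_2$ of the same length. Because $\alpha$ fixes $2n-1$ and $2n$ pointwise, the cycle of $\omega_2$ containing $2n-1$ (respectively $2n$) is the $\alpha$-image of the cycle of $\omega_1$ containing $2n-1$ (respectively $2n$). Since $\alpha \in \mathcal{H}_n$, we have $\alpha(\overline{a}) = \overline{\alpha(a)}$, so $\alpha$ sends a bar-paired block $(\mathcal{C},\overline{\mathcal{C}})$ of $\omega_1$ to a bar-paired block of $\omega_2$, and a composite cycle $(\mathcal{O},\overline{\mathcal{O}})$ of $\omega_1$ to a composite cycle of $\omega_2$. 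Hence the type of the cycle(s) carrying $2n-1$ and $2n$ (paired versus composite) and its length are preserved; combined with the $\mathcal{H}_n$-invariance of the underlying bipartition, this yields $\mty(\omega_1)=\mty(\omega_2)$.

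For the converse, I build $\alpha \in \mathcal{H}_{n-1}$ by matching the cycle decompositions (\ref{cyc_decom_H_n}) of $\omega_1$ and $\omega_2$ element-by-element. Since disjoint cycles commute and bar-paired cycles of a given length may be permuted among themselves, I first reorder so that in each $\omega_r$ the cycle block carrying $2n-1$ and $2n$ stands at the head of the decomposition; write the distinguished cycle starting at $2n-1$, which forces the bar-partner (Case $(\widehat{\lambda_1}^i,\lambda_2)$) or the second half of the composite cycle (Case $(\lambda_1,\widehat{\lambda_2}^i)$) to start at $2n$. Because $\mty(\omega_1)=\mty(\omega_2)$, the remaining bar-paired and composite cycles of $\omega_1$ and $\omega_2$ can be put in bijection respecting lengths, and each individual cycle in $\omega_2$ can be written starting at an arbitrarily chosen element. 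Define $\alpha$ position-by-position: if $a^{(1)}$ occupies position $j$ in some cycle of $\omega_1$, set $\alpha(a^{(1)})$ equal to the element in position $j$ of the matched cycle of $\omega_2$. Writing bar-partners in parallel columns ensures $\alpha(\overline{a}) = \overline{\alpha(a)}$, so $\alpha \in \mathcal{H}_n$; the choice of starting elements forces $\alpha(2n-1)=2n-1$ and $\alpha(2n)=2n$, so $\alpha \in \mathcal{H}_{n-1}$; and the position-by-position matching guarantees $\alpha \omega_1 \alpha^{-1}=\omega_2$.

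The main obstacle is purely bookkeeping: one must check simultaneously that the alignment of cycles is compatible with the $\overline{\cdot}$ involution (so that $\alpha$ lies in $\mathcal{H}_n$) and with the requirement that $2n-1$ and $2n$ be fixed (so that $\alpha$ lies in $\mathcal{H}_{n-1}$). The definition of marked-type has been tailored so that $2n-1$ and $2n$ occupy structurally identical positions in the two decompositions, which makes the simultaneous alignment possible in each of the two cases; everything else is a direct transcription of the classical argument for conjugacy in $\mathcal{H}_n$.
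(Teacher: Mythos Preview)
Your proof is correct and follows essentially the same route as the paper: the forward direction uses that $\alpha\in\mathcal{H}_{n-1}$ fixes $2n-1$ and $2n$ and preserves the bar-pairing, while the converse constructs $\alpha$ by a position-by-position matching of the cycle decompositions with the distinguished cycle aligned at $2n-1$. Your exposition is actually a bit more explicit than the paper's about why the constructed $\alpha$ fixes $2n-1$ and $2n$ (by choosing the starting point of the distinguished cycle), whereas the paper leaves this step to the reader.
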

\begin{proof}
If $\omega_1,\omega_2\in \mathcal{H}_n$ are in the same $\mathcal{H}_{n-1}$-conjugacy class then there exists $\alpha\in \mathcal{H}_{n-1}$ such that $\omega_1=\alpha\omega_2\alpha^{-1}.$ There are two cases to consider. Suppose that $\omega_2$ is written as a product of disjoint cycles as in (\ref{cyc_decom_H_n}):
\begin{enumerate}
\item[1-] if there exists $a\in p_2(n)$ and $1\leq j\leq k$ such that $\mathcal{C}_j=(a,\ldots)$ with length $r$ then $\mty(\omega_2)=\widehat{\ty(\omega_2)}^{1,i}.$ The decomposition of $\omega_1$ into product of disjoint cycles is
$$\alpha(\mathcal{C}_1)\alpha(\overline{\mathcal{C}_1})\alpha(\mathcal{C}_2)\alpha(\overline{\mathcal{C}_2})\cdots \alpha(\mathcal{C}_k)\alpha(\overline{\mathcal{C}_k})\alpha((\mathcal{O}^1,\overline{\mathcal{O}^1}))\alpha((\mathcal{O}^2,\overline{\mathcal{O}^2}))\cdots \alpha((\mathcal{O}^l,\overline{\mathcal{O}^l})).$$
Since $\alpha\in \mathcal{H}_n,$ the cycles do not change their lengths after applying $\alpha$ and $\alpha(a)=a$ which shows that $\mty(\omega_1)=\mty(\omega_2)=\widehat{\ty(\omega_2)}^{1,i}.$
\item[2-] if $2n-1$ and $2n$ both appear in a cycle $(\mathcal{O}^l,\overline{\mathcal{O}^l})$ of length $2r$ then the same reasoning as in item $1$ will show that $\mty(\omega_1)=\mty(\omega_2)=\widehat{\ty(\omega_2)}^{2,r}.$
\end{enumerate}
On the other direction, if $\mty(\omega_1)=\mty(\omega_2),$ we also distinguish the following two cases:
\begin{enumerate}
\item[1-] the first partition is disctinguished: the cycle decompositions of $\omega_1$ and $\omega_2$ can be written as follows: 
\begin{equation*}
\omega_1=\mathcal{C}_1\overline{\mathcal{C}_1}\mathcal{C}_2\overline{\mathcal{C}_2}\cdots \mathcal{C}_k\overline{\mathcal{C}_k}(\mathcal{O}^1,\overline{\mathcal{O}^1})(\mathcal{O}^2,\overline{\mathcal{O}^2})\cdots (\mathcal{O}^l,\overline{\mathcal{O}^l}),
\end{equation*}
\begin{equation*}
\omega_2=\mathcal{L}_1\overline{\mathcal{L}_1}\mathcal{L}_2\overline{\mathcal{L}_2}\cdots \mathcal{L}_k\overline{\mathcal{L}_k}(\mathcal{K}^1,\overline{\mathcal{K}^1})(\mathcal{K}^2,\overline{\mathcal{K}^2})\cdots (\mathcal{K}^l,\overline{\mathcal{K}^l}),
\end{equation*}
where the length of each $\mathcal{C}_i$ (resp. $\mathcal{O}^j$) equals the length of $\mathcal{L}_i$ (resp. $\mathcal{K}^j$) and the integer $2n-1$ appears in a cycle $\mathcal{C}_r$ (resp. $\mathcal{L}_r$) while its complement $2n$ appears in $\overline{\mathcal{C}_r}$ (resp. $\overline{\mathcal{L}_r}$). If for each $1\leq i\leq k,$ the integer $a_i$ appears first in the cycle $\mathcal{C}_i$ (resp. $\mathcal{L}_i$) then let $\overline{a_i}$ appears first in the writing of the cycle $\overline{\mathcal{C}_i}$ (resp. $\overline{\mathcal{L}_i}$). It would be clear then that the permutation $\alpha$ that cyclically takes the cycles $\mathcal{C}_i$ to $\mathcal{L}_i,$ $\overline{\mathcal{C}_i}$ to $\overline{\mathcal{L}_i}$ and $(\mathcal{O}^j,\overline{\mathcal{O}^j})$ to $(\mathcal{K}^j,\overline{\mathcal{K}^j})$ belongs to $\mathcal{H}_{n-1}$ and $\omega_2=\alpha\omega_1\alpha^{-1}.$
\item[2-] the second partition is disctinguished: reason in the same way as in item $1$ except that in this case both $2n-1$ and $2n$ appear in a cycle $(\mathcal{O}^j,\overline{\mathcal{O}^j})$ (resp. $(\mathcal{K}^j,\overline{\mathcal{K}^j})$).
\end{enumerate}
\end{proof}

\begin{cor}\label{cor:sym_Gelf_pair} The pair $(\mathcal{H}_n\times\mathcal{H}_{n-1}, \diag (\mathcal{H}_{n-1}) )$ is a symmetric Gelfand pair.
\end{cor}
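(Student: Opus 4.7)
The plan is to chain together the two key propositions already at our disposal: Proposition \ref{prop_couple_sym_Gel} reduces the Gelfand pair statement to showing that every $\omega \in \mathcal{H}_n$ is $\mathcal{H}_{n-1}$-conjugate to its inverse $\omega^{-1}$, and Proposition \ref{prop_conj_H_n-1} further reduces this to a purely combinatorial equality $\mty(\omega)=\mty(\omega^{-1})$. So the entire proof collapses to checking that the marked-type is invariant under inversion.

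To verify that last equality, I would first recall that inverting a permutation simply reverses each cycle: if $\omega$ has disjoint cycle decomposition as in (\ref{cyc_decom_H_n}), then $\omega^{-1}$ has the same collection of cycles with their entries read backwards. Reversal preserves cycle length, so the two underlying partitions $\lambda_1$ and $\lambda_2$ of $\ty(\omega)$ coincide with those of $\ty(\omega^{-1})$. Next I would check that the case distinction defining the marking in Section \ref{sec_hyp} is also preserved: if $2n-1$ and $2n$ lie in a paired cycle $\mathcal{C}_j \overline{\mathcal{C}_j}$ of $\omega$, then after reversal they still lie in a paired cycle of the same length in $\omega^{-1}$, so the mark is on $\lambda_1$ at the same position; similarly, if they both lie in a single cycle $(\mathcal{O}^j,\overline{\mathcal{O}^j})$, reversal keeps them inside a single cycle of the same split length, so the mark stays on $\lambda_2$ at the same position. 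Hence $\mty(\omega)=\mty(\omega^{-1})$.

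Applying Proposition \ref{prop_conj_H_n-1} gives $\alpha \in \mathcal{H}_{n-1}$ with $\omega^{-1}=\alpha\omega\alpha^{-1}$, i.e.\ $\omega$ and $\omega^{-1}$ are $\mathcal{H}_{n-1}$-conjugate for every $\omega\in\mathcal{H}_n$. Proposition \ref{prop_couple_sym_Gel} then immediately yields that $(\mathcal{H}_n\times\mathcal{H}_{n-1},\diag(\mathcal{H}_{n-1}))$ is a symmetric Gelfand pair.

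There is really no substantive obstacle here, since all the heavy lifting was done in Propositions \ref{prop_couple_sym_Gel} and \ref{prop_conj_H_n-1}; the only thing to be mildly careful about is the bookkeeping in case 1 of the marked-type, where one must remember that reversing $\mathcal{C}_j=(a_1,\dots,a_{2r})$ with $a_1\in p_2(n)$ still places an element of $p_2(n)$ into a length-$r$ half of a paired cycle, so the index $i$ in the marking is unchanged. Once this is observed, the corollary is essentially a one-line consequence.
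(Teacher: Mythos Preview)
Your proposal is correct and follows exactly the same route as the paper's proof: show $\mty(\omega)=\mty(\omega^{-1})$ from the cycle decomposition, apply Proposition~\ref{prop_conj_H_n-1} to get $\mathcal{H}_{n-1}$-conjugacy, then invoke Proposition~\ref{prop_couple_sym_Gel}. The only quibble is a notational slip in your final paragraph: in case~1 the cycle $\mathcal{C}_j$ has length $i$ (not $2r$), since it is one half of a paired pair $\mathcal{C}_j\overline{\mathcal{C}_j}$ rather than a single cycle of the form $(\mathcal{O},\overline{\mathcal{O}})$; this does not affect the argument.
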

\begin{proof}
It would be clear from the cycle decomposition that $\mty(\omega)=\mty(\omega^{-1})$ for any $\omega\in \mathcal{H}_n$ which implies that $\omega$ and $\omega^{-1}$ are $\mathcal{H}_{n-1}$-conjugate by Proposition \ref{prop_conj_H_n-1}. The result then follows from Proposition \ref{prop_couple_sym_Gel}.
\end{proof}

\begin{remark} The fact that $(\mathcal{S}_n\times\mathcal{S}_{n-1}, \diag (\mathcal{S}_{n-1}) )$ and $(\mathcal{H}_n\times\mathcal{H}_{n-1}, \diag (\mathcal{H}_{n-1}) )$ are both symmetric Gelfand pairs may suggest that in general $(\mathcal{S}_k\wr \mathcal{S}_n\times \mathcal{S}_k\wr \mathcal{S}_{n-1},\diag \mathcal{S}_k\wr \mathcal{S}_{n-1})$ is a symmetric Gelfand pair for any $k\in \mathbb{N}^*,$ where
$$\mathcal{S}_k \wr \mathcal{S}_n \footnote{ For the wreath product of symmetric groups, we use the definition given in \cite{Tout2018}}:= \lbrace w \in \mathcal{S}_{kn}; \ \forall \ 1 \leq r \leq n, \ \exists \ 1 \leq r' \leq n \mid w(p_{k}(r))=p_{k}(r')\rbrace$$
with $$p_k(i):=\lbrace (i-1)k+1, (i-1)k+2, \ldots , ik\rbrace.$$ This is not true. For example, if $k=3,$ $\omega=(1)(2)(3)(456)\in \mathcal{S}_3\wr \mathcal{S}_2$ then one can easily check that $\omega\neq \alpha\omega^{-1}\alpha^{-1}$ for any $\alpha\in \mathcal{S}_3\wr \mathcal{S}_1=\mathcal{S}_3$ which shows that $\omega$ and its inverse are not $\mathcal{S}_3\wr \mathcal{S}_1$-conjugate. This implies that the pair $(\mathcal{S}_3\wr \mathcal{S}_2\times \mathcal{S}_3\wr \mathcal{S}_{1},\diag (\mathcal{S}_3\wr \mathcal{S}_{1}))$ is not a symmetric Gelfand pair by \ref{prop_couple_sym_Gel}.
\end{remark}

\subsection{Zonal spherical functions of the pair $(\mathcal{H}_n\times\mathcal{H}_{n-1}, \diag (\mathcal{H}_{n-1}) )$}
Any partition $\lambda=(\lambda_1,\ldots,\lambda_l)$ of $n$ can be represented by a Young diagram. This is an array of $n$ squares having $l$ left-justified rows with row $i$ containing $\lambda_i$ squares for $1\leq i\leq l.$ For example, the following is the Young diagram of the partition $\lambda=(5,3,3,1,1)$ of $13$
$$\young({}{}{}{}{}{},{}{}{}{},{}{}{}{},{}{},{}{})$$
An \textit{exterior corner} of a Young diagram $\mathcal{Y}$ having $n$ squares is an extremity of a row where a new square can be added to obtain a new Young diagram with $n+1$ squares. Below we mark by bullets the four exterior corners of the Young diagram of $\lambda=(5,3,3,1,1)$
$$\young({}{}{}{}{}\bullet,{}{}{}{},{}{}{}{},{}{},{}{})\,\,\,\,\,\,\,\,\,\,\,\,\,\, \young({}{}{}{}{}{},{}{}{}\bullet,{}{}{}{},{}{},{}{})\,\,\,\,\,\,\,\,\,\,\,\,\,\, \young({}{}{}{}{}{},{}{}{}{},{}{}{}{},{}\bullet,{}{})\,\,\,\,\,\,\,\,\,\,\,\,\,\, \young({}{}{}{}{}{},{}{}{}{},{}{}{}{},{}{},{}{},\bullet)$$
We will write $\mu\nearrow \lambda$ to say that the partition $\lambda$ can be obtained from the partition $\mu$ by adding only one square in an exterior corner of the Young diagram of $\mu.$ We extend this to bipartitions and we write $(\mu_1,\mu_2)\nearrow (\lambda_1,\lambda_2)$ if $\mu_1\nearrow \lambda_1$ and $\mu_2=\lambda_2$ or $\mu_1=\lambda_1$ and $\mu_2\nearrow \lambda_2.$ 
\begin{prop}
The induced representation $1_{\diag (\mathcal{H}_{n-1})}^{\mathcal{H}_n\times \mathcal{H}_{n-1}}$ is multiplicity free and its character is:
\begin{equation}
\character (1_{\diag (\mathcal{H}_{n-1})}^{\mathcal{H}_n\times \mathcal{H}_{n-1}})=\sum_{(\sigma_1,\sigma_2)\nearrow (\rho_1,\rho_2),\atop{ (\rho_1,\rho_2)\vdash n}}\chi^{(\rho_1,\rho_2)}\times \chi^{(\sigma_1,\sigma_2)}.
\end{equation}
\end{prop}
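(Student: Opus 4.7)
The plan is to combine three ingredients: the Gelfand pair property that was just established in Corollary \ref{cor:sym_Gelf_pair}, Frobenius reciprocity, and the classical branching rule for the hyperoctahedral chain $\mathcal{H}_n \supset \mathcal{H}_{n-1}$.

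First, the multiplicity-free assertion is essentially free: Corollary \ref{cor:sym_Gelf_pair} says that $(\mathcal{H}_n\times \mathcal{H}_{n-1}, \diag(\mathcal{H}_{n-1}))$ is a (symmetric) Gelfand pair, so Theorem \ref{th} applied to this pair immediately yields that $1_{\diag(\mathcal{H}_{n-1})}^{\mathcal{H}_n\times \mathcal{H}_{n-1}}$ is multiplicity free. Thus the whole content of the Proposition reduces to identifying which pairs $((\rho_1,\rho_2),(\sigma_1,\sigma_2))$ occur with nonzero multiplicity, and showing this multiplicity is $1$.

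To identify the irreducible constituents I would apply Frobenius reciprocity. For any bipartitions $(\rho_1,\rho_2)\vdash n$ and $(\sigma_1,\sigma_2)\vdash n-1$,
\begin{equation*}
\langle 1_{\diag(\mathcal{H}_{n-1})}^{\mathcal{H}_n\times \mathcal{H}_{n-1}},\, \chi^{(\rho_1,\rho_2)}\times \chi^{(\sigma_1,\sigma_2)}\rangle_{\mathcal{H}_n\times \mathcal{H}_{n-1}} = \langle 1,\, (\chi^{(\rho_1,\rho_2)}\times \chi^{(\sigma_1,\sigma_2)})\!\mid_{\diag(\mathcal{H}_{n-1})}\rangle_{\mathcal{H}_{n-1}}.
\end{equation*}
Identifying $\diag(\mathcal{H}_{n-1})$ with $\mathcal{H}_{n-1}$, the restricted character is simply the pointwise product $(\chi^{(\rho_1,\rho_2)}\!\mid_{\mathcal{H}_{n-1}})\cdot \chi^{(\sigma_1,\sigma_2)}$, which is the character of the tensor product representation of $\mathcal{H}_{n-1}$.

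Next I would invoke the branching rule for the hyperoctahedral group, namely
\begin{equation*}
\chi^{(\rho_1,\rho_2)}\!\mid_{\mathcal{H}_{n-1}} = \sum_{(\mu_1,\mu_2)\nearrow (\rho_1,\rho_2)} \chi^{(\mu_1,\mu_2)},
\end{equation*}
which is the bipartition analogue of the classical rule for $\mathcal{S}_n\supset \mathcal{S}_{n-1}$ and follows from the fact that the irreducible representations of $\mathcal{H}_n$ are indexed by bipartitions of $n$, built from pairs of Specht modules. Substituting this into the inner product, using that characters of $\mathcal{H}_{n-1}$ are real so that $\langle \chi,\psi\cdot \varphi\rangle = \langle \chi\psi,\varphi\rangle$ reduces to orthonormality, gives
\begin{equation*}
\sum_{(\mu_1,\mu_2)\nearrow (\rho_1,\rho_2)}\langle \chi^{(\mu_1,\mu_2)}, \chi^{(\sigma_1,\sigma_2)}\rangle_{\mathcal{H}_{n-1}} = \sum_{(\mu_1,\mu_2)\nearrow (\rho_1,\rho_2)}\delta_{(\mu_1,\mu_2),(\sigma_1,\sigma_2)},
\end{equation*}
which is $1$ exactly when $(\sigma_1,\sigma_2)\nearrow (\rho_1,\rho_2)$ and $0$ otherwise. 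Summing $\chi^{(\rho_1,\rho_2)}\times \chi^{(\sigma_1,\sigma_2)}$ over all such pairs yields the displayed formula.

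The only nonroutine step in this plan is citing or justifying the branching rule for $\mathcal{H}_n\supset \mathcal{H}_{n-1}$; if a reference is not at hand, I would deduce it from the symmetric group branching rule by noting that $\mathcal{H}_n\cong \mathcal{S}_2\wr \mathcal{S}_n$ and that the irreducible bimodule indexed by $(\rho_1,\rho_2)$ decomposes as $(\Ind_{\mathcal{H}_{|\rho_1|}\times \mathcal{H}_{|\rho_2|}}^{\mathcal{H}_n}\,\text{trivial}^{(\rho_1)}\boxtimes \text{sign}^{(\rho_2)})$ in Macdonald's description, so restricting to $\mathcal{H}_{n-1}$ reduces via Mackey/Pieri to adding one box to either $\rho_1$ or $\rho_2$. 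Everything else is a short computation in character theory.
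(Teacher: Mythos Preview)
Your proposal is correct and follows essentially the same route as the paper: multiplicity-freeness from Corollary~\ref{cor:sym_Gelf_pair} and Theorem~\ref{th}, Frobenius reciprocity to reduce to an inner product over $\mathcal{H}_{n-1}$, the branching rule $\Res_{\mathcal{H}_{n-1}}^{\mathcal{H}_n}\chi^{(\rho_1,\rho_2)}=\sum_{(\mu_1,\mu_2)\nearrow(\rho_1,\rho_2)}\chi^{(\mu_1,\mu_2)}$ (the paper simply cites \cite[Section~6.1.9]{geck2000characters}), and orthogonality of irreducible characters. The only cosmetic difference is that the paper also checks the reverse inner product against the claimed sum directly, which is redundant once the multiplicities are determined.
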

\begin{proof} By Corollary \ref{cor:sym_Gelf_pair}, $(\mathcal{H}_n\times\mathcal{H}_{n-1}, \diag (\mathcal{H}_{n-1}) )$ is a Gelfand pair. This implies that $1_{\diag (\mathcal{H}_{n-1})}^{\mathcal{H}_n\times \mathcal{H}_{n-1}}$ is multiplicity free by Theorem \ref{th}. Now, take any irreducible character of the product group $\mathcal{H}_n\times \mathcal{H}_{n-1}$ which should have the form $\chi^{(\lambda_1,\lambda_2)}\times \chi^{(\delta_1,\delta_2)}$ where $(\lambda_1,\lambda_2)\vdash n$ and $(\delta_1,\delta_2)\vdash n-1.$ By the Frobenius reciprocity theorem (see for example \cite[Theorem 1.12.6]{sagan2001symmetric}) we have:
\begin{equation}\label{hyp_frob}
\langle\ \chi^{(\lambda_1,\lambda_2)}\times \chi^{(\delta_1,\delta_2)},\character (1_{\diag (\mathcal{H}_{n-1})}^{\mathcal{H}_n\times \mathcal{H}_{n-1}})\rangle\ _{\mathcal{H}_n\times \mathcal{H}_{n-1}}
=\langle\ \Res\chi^{(\lambda_1,\lambda_2)}\times \chi^{(\delta_1,\delta_2)},1\rangle\ _{\diag(\mathcal{H}_{n-1})}
\end{equation}
\begin{equation*}
=\frac{1}{|\mathcal{H}_{n-1}|}\sum_{h\in \mathcal{H}_{n-1}}\Res_{\mathcal{H}_{n-1}}^{\mathcal{H}_n}\chi^{(\lambda_1,\lambda_2)}(h) \chi^{(\delta_1,\delta_2)}(h),
\end{equation*}
where $\Res_{\mathcal{H}_{n-1}}^{\mathcal{H}_n}\chi^{(\lambda_1,\lambda_2)}$ denotes the restriction of the character $\chi^{(\lambda_1,\lambda_2)}$ of $\mathcal{H}_n$ to $\mathcal{H}_{n-1}.$
But 
\begin{equation*}
\Res_{\mathcal{H}_{n-1}}^{\mathcal{H}_n}\chi^{(\lambda_1,\lambda_2)}=\sum_{(\sigma_1,\sigma_2)\nearrow (\lambda_1,\lambda_2)}\chi^{(\sigma_1,\sigma_2)},
\end{equation*}
as it is shown in \cite[Section 6.1.9]{geck2000characters}. Thus, using the orthogonality property of the characters of $\mathcal{H}_{n-1},$ Equation (\ref{hyp_frob}) becomes:
\begin{equation}
\langle\ \chi^{(\lambda_1,\lambda_2)}\times \chi^{(\delta_1,\delta_2)},\character (1_{\diag (\mathcal{H}_{n-1})}^{\mathcal{H}_n\times \mathcal{H}_{n-1}})\rangle\ _{\mathcal{H}_n\times \mathcal{H}_{n-1}}=\begin{cases}
1 & \text{ if $(\delta_1,\delta_2)\nearrow (\lambda_1,\lambda_2)$}\\
0 & \text{ otherwise.}
\end{cases}
\end{equation}
On the other hand, we have
\begin{equation}
\langle\ \chi^{(\lambda_1,\lambda_2)}\times \chi^{(\delta_1,\delta_2)},\sum_{(\sigma_1,\sigma_2)\nearrow (\rho_1,\rho_2),\atop{ (\rho_1,\rho_2)\vdash n}}\chi^{(\rho_1,\rho_2)}\times \chi^{(\sigma_1,\sigma_2)}\rangle\ _{\mathcal{H}_n\times \mathcal{H}_{n-1}}
\end{equation}
\begin{equation*}
=\sum_{(\sigma_1,\sigma_2)\nearrow (\rho_1,\rho_2),\atop{ (\rho_1,\rho_2)\vdash n}}\langle\ \chi^{(\lambda_1,\lambda_2)},\chi^{(\rho_1,\rho_2)}\rangle\ _{\mathcal{H}_n}\langle\  \chi^{(\delta_1,\delta_2)},\chi^{(\sigma_1,\sigma_2)}\rangle\ _{\mathcal{H}_{n-1}}
\end{equation*}
\begin{equation*}
=\begin{cases}
1 & \text{ if $(\delta_1,\delta_2)\nearrow (\lambda_1,\lambda_2)$}\\
0 & \text{ otherwise.}
\end{cases}
\end{equation*}
This ends the proof of this proposition.

\end{proof}

\begin{cor} The zonal spherical functions of the pair $(\mathcal{H}_n\times\mathcal{H}_{n-1}, \diag (\mathcal{H}_{n-1}) )$ are
\begin{equation*}
\omega^{(\sigma_1,\sigma_2)\nearrow (\rho_1,\rho_2)}(x,y)=\frac{1}{2^{n-1}(n-1)!}\sum_{h\in \mathcal{H}_{n-1}}\chi^{(\rho_1,\rho_2)}(xh)\chi^{(\sigma_1,\sigma_2)}(yh).
\end{equation*}
\end{cor}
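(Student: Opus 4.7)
The plan is to combine the general formula for the zonal spherical functions of a pair of the form $(G\times K,\diag (K))$ recorded in Section \ref{sec:sphe_func} with the explicit multiplicity-free decomposition of $1_{\diag(\mathcal{H}_{n-1})}^{\mathcal{H}_n\times \mathcal{H}_{n-1}}$ established in the preceding proposition, and then to rewrite the resulting expression using the reality of the irreducible characters of $\mathcal{H}_n$.

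First I would invoke the preceding proposition, which identifies the irreducible constituents of the induced representation as the representations $\chi^{(\rho_1,\rho_2)}\times \chi^{(\sigma_1,\sigma_2)}$ with $(\sigma_1,\sigma_2)\nearrow (\rho_1,\rho_2)$ and $(\rho_1,\rho_2)\vdash n$, each appearing with multiplicity one. Plugging these data into the formula
\begin{equation*}
\omega_{i,j}(x,y)=\frac{1}{|K|}\sum_{h\in K}\mathcal{X}_i(x^{-1}h)\mathcal{Y}_j(y^{-1}h)
\end{equation*}
from Section \ref{sec:sphe_func}, with $K=\mathcal{H}_{n-1}$ (of cardinality $2^{n-1}(n-1)!$), immediately yields
\begin{equation*}
\omega^{(\sigma_1,\sigma_2)\nearrow (\rho_1,\rho_2)}(x,y)=\frac{1}{2^{n-1}(n-1)!}\sum_{h\in \mathcal{H}_{n-1}}\chi^{(\rho_1,\rho_2)}(x^{-1}h)\chi^{(\sigma_1,\sigma_2)}(y^{-1}h).
\end{equation*}

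The remaining task is to convert $x^{-1}h$ and $y^{-1}h$ into $xh$ and $yh$ inside the sum. For this I would invoke the classical fact that every irreducible character of the hyperoctahedral group is integer-valued, so that $\chi(g^{-1})=\chi(g)$ for every $g\in \mathcal{H}_n$ (and similarly on $\mathcal{H}_{n-1}$); this is in perfect agreement with Corollary \ref{cor:sym_Gelf_pair}, where the $\mathcal{H}_{n-1}$-conjugacy of $g$ and $g^{-1}$ is proved. Combined with the class-function property $\chi(ab)=\chi(ba)$, this yields $\chi^{(\rho_1,\rho_2)}(x^{-1}h)=\chi^{(\rho_1,\rho_2)}(h^{-1}x)$, and the change of variables $h\mapsto h^{-1}$ (a bijection of $\mathcal{H}_{n-1}$) followed by one more application of the class-function property turns this into $\chi^{(\rho_1,\rho_2)}(xh)$. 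Performing the same manipulation on $\chi^{(\sigma_1,\sigma_2)}(y^{-1}h)$ gives the stated formula.

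Since the corollary is a direct specialisation of the general formula recalled in Section \ref{sec:sphe_func}, no serious obstacle arises; the only step requiring even a brief justification is the replacement of inverses, which rests on the well-known reality of hyperoctahedral characters.
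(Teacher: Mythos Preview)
Your proposal is correct and follows the paper's intended route: the corollary is stated in the paper without proof, as an immediate specialisation of the general formula $\omega_{i,j}(x,y)=\frac{1}{|K|}\sum_{h\in K}\mathcal{X}_i(x^{-1}h)\mathcal{Y}_j(y^{-1}h)$ from Section~\ref{sec:sphe_func} to the multiplicity-free decomposition just established. The only point you add beyond the paper is the justification for replacing $x^{-1}h,\,y^{-1}h$ by $xh,\,yh$; your argument via reality of hyperoctahedral characters and the substitution $h\mapsto h^{-1}$ (applied simultaneously to both factors in the summand) is valid, and one could equally well deduce it from the fact that the zonal spherical functions lie in $L(\diag(K)\backslash G\times K/\diag(K))$ together with $(x,y)^{-1}\in\diag(K)(x,y)\diag(K)$, which is precisely the symmetric Gelfand pair property.
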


The $\mathcal{H}_{n-1}$-generalised characters of $\mathcal{H}_n$ are given by 
\begin{equation}\label{gen_char_hyp}
\varpi^{(\sigma_1,\sigma_2)\nearrow (\rho_1,\rho_2)}(x)=\frac{\chi^{(\sigma_1,\sigma_2)}(1)}{2^{n-1}(n-1)!}\sum_{h\in \mathcal{H}_{n-1}}\chi^{(\rho_1,\rho_2)}(xh)\chi^{(\sigma_1,\sigma_2)}(h) \text{ for any $x\in \mathcal{H}_{n},$}
\end{equation}
where $(\sigma_1,\sigma_2)$ is a bipartition of $n-1$ and $(\rho_1,\rho_2)$ is a bipartition of $n$ with $(\sigma_1,\sigma_2)\nearrow (\rho_1,\rho_2).$ From the fact that 
\begin{equation*}
\chi^{(\sigma_1,\sigma_2)}(1)=(n-1)!\frac{\chi^{\sigma_1}(1)\chi^{\sigma_2}(1)}{|\sigma_1|!|\sigma_2|!},
\end{equation*}
the expression (\ref{gen_char_hyp}) can be reduced to 
\begin{equation*}
\varpi^{(\sigma_1,\sigma_2)\nearrow (\rho_1,\rho_2)}(x)=\frac{\chi^{\sigma_1}(1)\chi^{\sigma_2}(1)}{2^{n-1}|\sigma_1|!|\sigma_2|!}\sum_{h\in \mathcal{H}_{n-1}}\chi^{(\rho_1,\rho_2)}(xh)\chi^{(\sigma_1,\sigma_2)}(h) \text{ for any $x\in \mathcal{H}_{n}.$}
\end{equation*}
A Murnagham-Nakayama rule for the characters of the hyperoctahedral group appears in \cite{halverson1996murnaghan}. As a generalisation of the work of Strahov in \cite{strahov2007generalized}, it would be natural to investigate a Murnaghan–Nakayama type rule and other combinatorial identities for the $\mathcal{H}_{n-1}$-generalised characters of $\mathcal{H}_n.$

\bibliographystyle{plain}
% use the following instead if you encounter problems 
%\bibliographystyle{alpha}
\bibliography{biblio}

\end{document}